\setlist[enumerate]{
  label=(\thethm.\arabic*),
  before={\setcounter{enumi}{\value{equation}}},
  after={\setcounter{equation}{\value{enumi}}},
  itemsep=1ex
}
\setlist[itemize]{
  leftmargin=*,
  itemsep=1ex,
  label=$\circ$
}
\newtheorem*{thm-plain}{Theorem}
\newtheorem{thm}{Theorem}
\newtheorem{lem}[thm]{Lemma}
\newtheorem{prp}[thm]{Proposition}
\theoremstyle{definition}
\newtheorem*{dfn-plain}{Definition}
\theoremstyle{remark}
\newtheorem*{rem-plain}{Remark}
\newcommand{\inv}{^{-1}}
\newcommand{\from}{\colon}
\newcommand{\lto}{\longrightarrow}
\newcommand{\inj}{\hookrightarrow}
\newcommand{\isom}{\cong}
\newcommand{\defn}{\coloneqq}
\newcommand{\tensor}{\otimes}
\newcommand{\wt}{\widetilde}
\newcommand{\factor}[2]{\left. \raise 2pt\hbox{$#1$} \right/\hskip -2pt \raise -2pt\hbox{$#2$}}
\DeclareMathOperator{\rk}{rk}
\newcommand{\set}[1]{\left\{ #1 \right\}}
\def\rd#1.{\lfloor{#1}\rfloor}
\def\rp#1.{\lceil{#1}\rceil}
\def\tw#1.{\langle{#1}\rangle}
\renewcommand{\O}[1]{\mathscr{O}_{#1}}
\newcommand{\Omegap}[2]{\Omega_{#1}^{#2}}
\newcommand{\Omegal}[3]{\Omega_{#1}^{#2} \big( \!\log #3 \big)}
\newcommand{\Reg}[1]{{#1}_{\mathrm{reg}}}
\newcommand{\Sing}[1]{{#1}_{\mathrm{sg}}}
\newcommand{\codim}[2]{\mathrm{codim}_{#1}(#2)}
\def\Hnought#1.#2.{\mathit{\Gamma} \!\left( #1, #2 \right)}
\def\HH#1.#2.#3.{\mathrm{H}^{#1} \!\left( #2, #3 \right)}
\def\euler#1.#2.{\chi \!\left( #1, #2 \right)}
\def\HHbig#1.#2.#3.{\mathrm{H}^{#1} \big( #2, #3 \big)}
\def\hh#1.#2.#3.{h^{#1} \!\left( #2, #3 \right)}
\def\RR#1.#2.#3.{R^{#1} #2_* #3}
\def\HHc#1.#2.#3.{\mathrm{H}_{\mathrm{c}}^{#1} \!\left( #2, #3 \right)}
\def\Hh#1.#2.#3.{\mathrm{H}_{#1} \!\left( #2, #3 \right)}
\def\Hom#1.#2.#3.{\mathrm{Hom}_{#1} \!\left( #2, #3 \right)}
\def\sHom#1.#2.{\mathscr{H}\!om \!\left( #1, #2 \right)}
\def\Ext#1.#2.#3.{\mathrm{Ext}^{#1} \!\left( #2, #3 \right)}
\def\sExt#1.#2.#3.{\mathscr{E}\!xt^{#1} \!\left( #2, #3 \right)}
\DeclareMathOperator{\Spec}{Spec}
\DeclareMathOperator{\res}{res}
\DeclareMathOperator{\supp}{supp}
\newcommand{\germ}[2]{\left( #1 \in #2 \right)} 
\newcommand{\zp}{\ensuremath{\mathbb Z_{(p)}}}
\renewcommand{\theta}{\vartheta}
\renewcommand{\phi}{\varphi}
\newcommand{\Z}{\ensuremath{\mathbb Z}}
\newcommand{\Q}{\ensuremath{\mathbb Q}}
\newcommand{\sG}{\mathscr G}  
  \newcommand{\sL}{\mathscr L}
\definecolor{forrest}{RGB}{81,133,49}
\definecolor{mydarkblue}{RGB}{10,92,153}
\title{A note on Flenner's Extension Theorem}
\dedicatory{\raggedright
\hspace{.332\linewidth} Now I know you are not there. \\
\hspace{.332\linewidth} We will not see you again. \\
\hspace{.332\linewidth} At least with our eyes. \\
\hspace{.332\linewidth} You are out of the dark, \\
\hspace{.332\linewidth} out of space.}
\author{Patrick Graf}
\address{Lehrstuhl f\"ur Mathematik I, Universit\"at Bayreuth, 95440 Bayreuth, Germany}
\email{\href{mailto:patrick.graf@uni-bayreuth.de}{patrick.graf@uni-bayreuth.de}}
\urladdr{\href{http://www.pgraf.uni-bayreuth.de/en/}{www.graficland.uni-bayreuth.de}}
\date{May 6, 2019}
\thanks{The author was supported in full by a DFG Research Fellowship. This research was carried out at the University of Utah.}
\keywords{Reflexive differentials, differential forms on the smooth locus, extension theorem, high-codimension singularities, singularities in positive characteristic}
\subjclass[2010]{14J17, 32S05, 32J25, 13A35}
\begin{document}

\begin{abstract}
We show that any $p$-form on the smooth locus of a normal complex space extends to a resolution of singularities, possibly with logarithmic poles, as long as $p \le \codim X{\Sing X} - 2$.
A stronger version of this result, allowing no poles at all, is originally due to Flenner.
Our proof, however, is not only completely different, but also shorter and technically simpler.
We furthermore give examples to show that the statement fails in positive characteristic.
\end{abstract}

\maketitle

\thispagestyle{empty}

Let $X$ be a normal complex space and $\pi \from Y \to X$ a resolution of singularities.
It is an important question whether reflexive differentials, i.e.~differential forms on the smooth locus $\Reg X$ of $X$, extend holomorphically to forms on $Y$.
Two important papers proving such extension properties under additional assumptions on the singularities of $X$ are~\cite{GKKP11} and the recent~\cite{KebekusSchnell18}.
In a slightly different direction, Ohsawa~\cite{Ohsawa82} and independently Steenbrink--van Straten~\cite{SvS85} showed that extension always holds if $X$ has isolated singularities and $p \le \dim X - 2$.
Later, Flenner~\cite{Fle88} generalized this to not necessarily isolated singularities as follows:

\begin{thm}[Flenner's Extension Theorem] \label{flenner}
Let $X$ be a germ of a normal complex space (or even more generally, the spectrum of a normal complete algebra over a field of characteristic zero) and $\pi \from Y \to X$ a resolution.
Then the canonical map
\[ \HHbig0.Y.\Omegap Yp. \lto \HHbig0.\Reg X.\Omegap Xp. \]
is bijective for all $p \le \codim X{\Sing X} - 2$.
\end{thm}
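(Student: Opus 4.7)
The plan is to prove injectivity and surjectivity separately. Injectivity is immediate: since $\pi$ is an isomorphism over $\Reg X$, the preimage $\pi^{-1}(\Reg X) = Y \setminus E$, with $E \defn \pi^{-1}(\Sing X)_{\mathrm{red}}$, is open and dense in $Y$. As $\Omegap Yp$ is locally free on the smooth space $Y$ and hence torsion-free, any global section that vanishes on a dense open set must vanish identically, and this yields injectivity of the restriction map.

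For surjectivity, fix $\omega \in \HH0.\Reg X.\Omegap Xp.$. After passing to a log resolution if necessary, I may assume that $E$ is a reduced SNC divisor; the task is then to extend the pullback $\pi^*\omega$, which is a priori only defined on $Y \setminus E$, to a section of $\Omegap Yp$ on all of $Y$. I would proceed in two stages. The first is the \lext: show that $\pi^*\omega$ extends to a section of the logarithmic sheaf $\Omegap Yp(\log E)$. I would approach this by a local analysis of $\pi^*\omega$ at the generic point of each component of $E$, using normality of $X$ and the range constraint on $p$, combined with a Hartogs-type argument to handle the higher-codimension strata of $E$, where the form automatically extends holomorphically.

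The second, more delicate stage is to show that the log extension has no actual poles, meaning that its residues along each component $E_i$ of $E$ vanish. The residue along $E_i$ is a global section of $\Omegap{E_i}{p-1}$. Geometrically, $\pi|_{E_i}$ contracts $E_i$ onto a subvariety $\pi(E_i) \subseteq \Sing X$ of codimension at least $c \defn \codim X{\Sing X}$, so the general fiber of this contraction has dimension at least $c - 1 \ge p + 1$ under the assumption $p \le c - 2$. Since $\omega$ descends to $\Reg X$ and thus sees nothing along the fiber directions of $\pi|_{E_i}$, one expects the residue to vanish on a generic fiber of this contraction.

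The main obstacle, I anticipate, is to upgrade this fiberwise vanishing to the vanishing of the residue as a global section on $E_i$. Here the codimension hypothesis should enter essentially, most plausibly either through a pushforward along $\pi|_{E_i}$, using $\dim \pi(E_i) \le \dim X - c$, or by combining the residue sequence on the SNC exceptional divisor with vanishing of suitable $R^\bullet \pi_*$ terms. Once the residues are killed on every component of $E$, the log extension lies in $\Omegap Yp$ and surjectivity follows.
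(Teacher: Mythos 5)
Your two-stage decomposition --- first extend $\pi^*\omega$ with logarithmic poles, then kill the residues --- is exactly the route this note itself points to: the paper proves only the logarithmic version (\cref{flenner log}) and remarks that combining it with \cite[Thm.~3.1]{GK13} recovers \cref{flenner} in full strength \emph{for $p=1$ only}. The problem is that both of your stages are missing the idea that makes them work. For the logarithmic extension, a ``local analysis at the generic point of each component of $E$, using normality and the range constraint on $p$'' cannot succeed: normality controls functions, not $p$-forms, and there is no local reason why $\pi^*\omega$ should acquire poles of order at most one along an exceptional divisor (the Hartogs step for the codimension-$\ge 2$ strata of $E$ is the only easy part). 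The paper's argument is irreducibly global: one takes the \emph{minimal} effective exceptional divisor $G$ with $\pi^*\sigma \in \HH0.Y.\Omega_Y^p(\log E)(G).$, uses the Big Negativity Lemma (\cref{neg lemma}) to find a component $P \subset G$ on which $-G$ is relatively big, and plays off bigness on a general fibre $F$ of $\pi|_P$ (which forces $\kappa \ge \codim X{\Sing X} - 1$) against Bogomolov--Sommese vanishing (which forces $\kappa \le p$). That some genuinely characteristic-zero input of this kind is unavoidable is shown by \cref{flenner fail} in the same paper: for cones over Nakano-violating pairs in characteristic $p$, even the \emph{logarithmic} extension fails, so any argument like yours that never invokes characteristic zero would prove a false statement.

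The second stage has a more basic flaw. The residue of the log extension along $E_i$ is an intrinsic $(p-1)$-form on $E_i$, extracted from the polar part of $\pi^*\omega$; it is not a restriction of $\omega$, so the heuristic that it ``sees nothing along the fibre directions'' of $\pi|_{E_i}$ is not an argument --- a nonzero $(p-1)$-form on $E_i$ can perfectly well restrict nontrivially to a fibre of dimension $\ge p+1$. This residue-killing step is precisely where the note stops short of the full \cref{flenner}: it is known for $1$-forms by \cite[Thm.~3.1]{GK13}, and for general $p$ only through Flenner's original Hodge-theoretic analysis of isolated singularities, which is the machinery this paper deliberately avoids. As written, your proposal would therefore need to be repaired in stage one by importing the negativity-plus-Bogomolov--Sommese mechanism, and even then it would only establish \cref{flenner log}, not \cref{flenner}, for $p \ge 2$.
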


Both~\cite{Ohsawa82, SvS85} and~\cite{Fle88} heavily rely in their proofs on results about the Hodge theory of isolated singularities.
The purpose of this short note is twofold: Firstly, to give a novel and technically significantly simpler proof of the following logarithmic version of \cref{flenner}.
While this variant is obviously somewhat weaker, we point out that by combining it with~\cite[Thm.~3.1]{GK13}, at least for $1$-forms we do obtain a new (and still simpler) proof of \cref{flenner} in full strength.

\begin{thm}[Logarithmic Flenner theorem] \label{flenner log}
Let $X$ be a germ of a normal complex space and $\pi \from Y \to X$ a log resolution, with exceptional locus $E \subset Y$.
Then the canonical map
\[ \HHbig0.Y.\Omega_Y^p(\log E). \lto \HHbig0.\Reg X.\Omegap Xp. \]
is bijective for all $p \le \codim X{\Sing X} - 2$.
\end{thm}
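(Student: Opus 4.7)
Injectivity is immediate: $\pi$ restricts to an isomorphism between $Y \setminus E$ and $\Reg X$, and any log form vanishing on this dense open subset must vanish throughout $Y$.

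For surjectivity, fix $\sigma \in \HHbig 0.\Reg X.\Omegap Xp.$. The key initial observation is that $\Omegal Yp{E}$ is locally free on the smooth variety $Y$, hence of depth $\ge 2$; consequently, sections extend uniquely across any closed subset of codimension $\ge 2$. It therefore suffices to verify that $\pi^*\sigma$, a~priori defined only on $Y \setminus E$, has at most a logarithmic pole along the generic point of each irreducible component $E_i$ of $E$.

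I would establish this by induction on $n \defn \dim X$. The base case $n = 2$ forces $c \defn \codim X{\Sing X} = 2$ and $p = 0$, whereupon the statement reduces to the Riemann extension theorem for holomorphic functions on the normal germ $X$. For the inductive step, fix a generic point $y \in E_i$ with image $x = \pi(y)$, and choose a sufficiently general hyperplane $H$ through $x$ after locally embedding $X \subset \C^N$. Appropriate Bertini statements arrange that $H$ is normal with $\codim H{\Sing H} = c$, that $Y_H \defn \pi^{-1}(H)$ is smooth, that $E + Y_H$ is SNC, that $\pi|_{Y_H} \from Y_H \to H$ is a log resolution, and --- provided $\dim \pi(E_i) \ge 1$ --- that $Y_H$ is transverse to $E_i$ at $y$. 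The inductive hypothesis applied to $\sigma|_{\Reg H}$ then provides an extension on $Y_H$ with at most a log pole along $E_i \cap Y_H$; a local coordinate computation combined with the transversality shows that any higher pole order of $\pi^*\sigma$ along $E_i$ would survive restriction to the generic transverse slice, contradicting the inductive log bound. As a bookkeeping device for transferring information between $Y_H$ and $Y$, the Poincar\'e residue sequence
\[ 0 \lto \Omegal Yp{E} \lto \Omegal Yp{E + Y_H} \xrightarrow{\mathrm{res}_{Y_H}} \Omegal{Y_H}{p-1}{E\cap Y_H} \lto 0 \]
is the natural tool.

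The main obstacle is the case in which $E_i$ is entirely contracted to the point $x$, so that every hyperplane $H$ through $x$ pulls back to a divisor $Y_H$ containing $E_i$; here the transversality argument fails. I expect this isolated-singularity-type case to require a separate argument, perhaps by applying the inductive hypothesis to $\pi|_{Y_H} \from Y_H \to H$ for a generic pencil of hyperplanes $\{H_t\}_t$ through $x$ and then gluing log extensions across the corresponding family $\{Y_{H_t}\}$. Handling this case cleanly --- presumably via a local cohomology estimate on $Y$ that exploits the codimension bound $p \le c - 2$ --- strikes me as the pivotal technical point of the proof, and is probably also the step that will break down in positive characteristic, reconciling with the counterexamples promised later in the paper.
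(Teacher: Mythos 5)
Your proposal contains a genuine gap, and you have in fact located it yourself: the case of an exceptional component $E_i$ contracted to a point is not a troublesome corner case to be patched separately --- it is the entire content of the theorem. When $X$ has an isolated singularity (so $c = n$ and the statement is the Ohsawa/Steenbrink--van Straten range $p \le n-2$), \emph{every} component of $E$ is contracted to the vertex of the germ, the transverse-slicing step never applies, and your induction has nothing to stand on. Even in the non-isolated case the induction loses ground: a general hyperplane $H$ through the chosen point $x$ typically acquires $x$ as an additional singular point, and when $c = n$ one gets $\codim H{\Sing H} = n-1 = c-1$, so the inductive hypothesis only controls forms of degree $p \le c-3$, one short of what is needed. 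The sentence beginning ``I expect this isolated-singularity-type case to require a separate argument'' is therefore not a remark about a technical loose end but an admission that the theorem has not been proved; the pencil-of-hyperplanes and local-cohomology ideas you gesture at are not developed to the point where one could check them. (A smaller issue: in the analytic category you must also first establish that $\pi^*\sigma$ is meromorphic along $E$ at all, which requires a coherence argument --- Grauert's direct image theorem plus the R\"uckert Nullstellensatz --- before ``pole order along $E_i$'' makes sense.)

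For comparison, the paper's proof runs along entirely different lines and handles the contracted case on an equal footing with all others. One writes $\pi^*\sigma$ as a section of $\Omegal YpE(G)$ for a minimal effective $\pi$-exceptional divisor $G$ and assumes $G \ne 0$. The Big Negativity Lemma (\cref{neg lemma}) produces a component $P \subset G$ on which $-G$ is relatively big over $B = \pi(P)$; the residue sequence along $P$ and the relative log differential filtration then inject $\O F(-G|_F)$ into some $\Omegal F{r-i}{F^c}$ on a general fibre $F$ of $P \to B$, with $r - i \le p$. Since $B \subset \Sing X$, one has $\dim F \ge c-1$, so Bogomolov--Sommese vanishing forces $p \ge \kappa(\O F(-G|_F)) = \dim F \ge c-1$, contradicting $p \le c-2$. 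Note that the contracted case $B = \{x\}$ is where this argument is at its strongest ($F = P$ has dimension $n-1$), whereas it is exactly where your slicing argument breaks down. The positive-characteristic counterexamples of the paper confirm that whatever closes your gap must use characteristic zero in an essential way (here, Bogomolov--Sommese via closedness of logarithmic forms), so no purely formal slicing or local-cohomology argument can succeed.
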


Secondly, since we can prove it in characteristic zero slowly, we can find a counterexample in positive characteristic quickly.
Of course, nobody ever claimed that Flenner holds in characteristic $p$, but this means in particular that no counterexamples have been given so far.

\begin{thm}[Characteristic $p$ failure of Flenner's theorem] \label{flenner fail}
Fix an algebraically closed field $k$ of characteristic $p > 0$.
Then for any $n \in \set{3, 4}$, there exists an $n$-dimensional isolated singularity $\germ0X$ admitting a log resolution $\pi \from Y \to X$, with exceptional divisor $E$, and such that
\[ \HHbig0.Y.\Omega_Y^q(\log E). \lto \HHbig0.\Reg X.\Omegap Xq. \]
is \emph{not} surjective for any $1 \le q \le n - 2$.
\end{thm}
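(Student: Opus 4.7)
I propose to construct the examples as affine cones. Given $(Z, L)$ with $Z$ smooth projective of dimension $n-1$ over $k$ and $L$ very ample, set
\[
X \defn \Spec \bigoplus_{d \ge 0} \HHbig0.Z.L^d., \quad \pi \from Y \defn \mathrm{Bl}_0 X \to X, \quad E \defn \pi^{-1}(0).
\]
Then $X$ is an $n$-dimensional normal affine variety with a unique singular point at the vertex, $Y$ is naturally the total space of $L^{-1}$ over $Z$ (with bundle projection $p \from Y \to Z$), and $E \isom Z$ is the smooth zero section; consequently $(Y, E)$ is SNC and $\pi$ is a log resolution.

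\textbf{Reduction to cohomology on $Z$.} The relative logarithmic cotangent sequence
\[
0 \lto p^* \Omegap Zq \lto \Omegal Yq E \lto p^* \Omegap Z{q-1} \lto 0,
\]
combined with the affine pushforwards $p_* \O Y = \bigoplus_{d \ge 0} L^d$ on $Y$ and $\bigoplus_{d \in \Z} L^d$ on $Y \setminus E = \Reg X$, organises both sides of the extension map by $\mathbb G_m$-weight $d \in \Z$. In weight $d \ge 0$ the two sides agree, while in weight $d < 0$ the source vanishes. Hence the cokernel receives contributions from $\HHbig0.Z.\Omegap Zp \tensor L^{-d}.$ for $p \in \{q-1, q\}$ and $d \ge 1$. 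Explicitly, a nonzero section of $\Omegap Z1 \tensor L^{-d}$ produces the $1$-form $\omega = p^* \eta / t^d$ on $\Reg X$, where $t$ is a local defining function of $E$; in local coordinates $(z_1, \ldots, z_{n-1}, t)$ its coefficients lie in the $dz_i$-directions rather than the $dt/t$-direction, so the order-$d$ pole along $E$ is not logarithmic and $\omega$ fails to lift to $\Omegal Yq E$. In characteristic zero all these groups vanish for $q \le n-2$ by Serre duality and Akizuki--Nakano, perfectly matching Flenner.

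\textbf{Choice of $Z$ and main obstacle.} The task becomes producing, in characteristic $p$, a smooth projective $(n-1)$-fold $Z$ with an ample line bundle $L$ and an injection $L^d \inj \Omegap Z1$, equivalently a nonzero section of $\Omegap Z1 \tensor L^{-d}$. A single such section simultaneously witnesses non-surjectivity for $q = 1$ (through the $\Omegap Zq$-summand of the cokernel) and, when $n = 4$, for $q = 2$ (through the $\Omegap Z{q-1}$-summand), covering all $q$ in the required range. Raynaud-type constructions, going back to the work of Tango and Raynaud on pathological line bundles, furnish exactly such pairs: a Raynaud surface handles $n = 3$, and for $n = 4$ I build an analogous smooth projective three-fold, for instance as a suitable $\PP{1}$-bundle over a Raynaud surface with the polarisation chosen so that both the ample sub-line-bundle of $\Omegap Z1$ and the ampleness of the total polarisation are preserved. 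The main difficulty lies precisely in this three-fold construction: naive products such as $S \times \PP{1}$ fail via Künneth, since ampleness of the polarisation and the existence of the required twisted $1$-form cannot be arranged on separate factors, so a genuine bundle or cyclic cover construction is required.
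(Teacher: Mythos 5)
Your cone construction, the $\mathbb G_m$-weight analysis of the extension map, and the observation that a single nonzero section of $\Omegap Z1 \tensor L^{-d}$ obstructs extension both for $q=1$ and (via the residue summand) for $q=2$ when $n=4$ are all sound; the paper takes the same cone route and outsources exactly the computation you sketch to \cite[Prop.~11.3]{FunnyExt}. The gap is in the input you feed into the cone. For $n=4$ you must actually exhibit a smooth projective \emph{threefold} $Z$ with an ample $L$ and $\HH0.Z.\Omegap Z1 \tensor L^{-d}. \ne 0$, and you do not: you correctly observe that products and pullbacks fail (a nonzero map from an ample line bundle into a line bundle pulled back from a lower-dimensional base would force the latter to be big), you propose ``a suitable $\PP1$-bundle or cyclic cover,'' and you then declare this the main difficulty without resolving it. Even for $n=3$ some care is needed: Raynaud surfaces are usually quoted for the failure of Kodaira vanishing, $\HH1.X.L^{-1}. \ne 0$, which is not the same as the Nakano-type non-vanishing $\HH0.X.\Omegap X1 \tensor L^{-1}. \ne 0$ you actually need, and the Tango form naturally lives on (the pullback from) the base curve, where the same non-bigness obstruction recurs. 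As written, the theorem is unproved for $n=4$ and only conditionally established for $n=3$.

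The paper closes precisely this gap with Mumford's regularization trick (\cref{reg forms}, after \cite[p.~340]{Mumford61}), which trivializes the existence question in every dimension where resolution of singularities is available --- whence the restriction to $n \in \set{3,4}$, i.e.\ to surface and threefold bases. One starts with an \emph{arbitrary} smooth projective variety, an arbitrary ample $L$, and an arbitrary nonzero \emph{rational} section of $\Omegap Zq \tensor L^{-1}$; in characteristic $p$ such a twisted form becomes regular after pullback along suitable finite separable covers; one then normalizes, resolves, and restores ampleness of the resulting big and nef polarization by subtracting a $g$-exceptional $\Q$-divisor and clearing its denominators with Bloch--Gieseker root covers. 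This regularization idea is what your proposal is missing; with it, no bespoke Raynaud-type threefold is required, and one even gets nonzero sections of $\Omegap Zq \tensor L^{-1}$ for all $q$ simultaneously rather than relying on the $q=1$ case alone.
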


Similar examples also exist for higher values of $n$, assuming resolution of singularities in dimension $n - 1$.

\section*{Extension with logarithmic poles}

The only substantial ingredients to our proof of \cref{flenner log} (besides certain standard results) are \cref{neg lemma} below and the Bogomolov--Sommese vanishing theorem, asserting that on a projective snc pair $(X, D)$, any line bundle $\sL \subset \Omegal XpD$ has Kodaira dimension $\kappa(\sL) \le p$.
The latter statement follows from the closedness of logarithmic forms on $(X, D)$ and the branched covering trick.
While closedness is usually viewed as a consequence of Deligne's results on degeneracy of some spectral sequence~\cite{Deligne71}, a very short proof based on the classical theory of harmonic integrals has been given by Noguchi~\cite{Nog95}.
In the case $D = \emptyset$ (which, however, is not sufficient for our purposes), all one needs for closedness is actually Stokes' theorem.

\begin{prp}[Big Negativity Lemma] \label{neg lemma}
Let $\pi \from Y \to X$ be a projective bimeromorphic morphism of normal complex spaces.
Then for any nonzero effective $\pi$-exceptional \Q-Cartier divisor $E$ on $Y$, there is an irreducible component $P \subset E$ such that $-E\big|_P$ is $\pi\big|_P$-big (i.e.~the restriction to a general fibre of $\pi\big|_P$ is big).
\end{prp}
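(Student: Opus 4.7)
The plan is to induct on $d := \dim \pi(\supp E)$ and reduce to the case where $\pi$ contracts $\supp E$ to a single point. For the inductive step, assume $d \ge 1$. I would cut by a sufficiently general local analytic hypersurface $H \subset X$ meeting $\pi(\supp E)$ transversely, and restrict to $Y' := \pi\inv(H)$ with $E' := E|_{Y'}$. A routine check shows that $\pi|_{Y'}$ is again projective bimeromorphic, $E'$ is a nonzero effective $\pi|_{Y'}$-exceptional $\Q$-Cartier divisor, and $\dim \pi|_{Y'}(\supp E') = d - 1$. The induction hypothesis then supplies an irreducible component $P'$ of $E'$ with $-E'|_{P'}$ being $\pi|_{P'}$-big. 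Writing $P' = P \cap Y'$ for a component $P \subseteq E$, generality of $H$ guarantees that a general fibre of $\pi|_{P'}$ is a general fibre of $\pi|_P$, so the bigness transfers to $-E|_P$.

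For the base case $d = 0$ I may assume $\pi(\supp E) = \{x\}$, so that each irreducible component $E_j$ of $E$ is a projective variety; write $E = \sum_j a_j E_j$. The crux is an auxiliary effective $\pi$-exceptional $\Q$-Cartier divisor $G = \sum_j b_j E_j$ with all $b_j > 0$ and with $-G$ being $\pi$-ample. Given such a $G$, I set $t := \max_j (a_j / b_j)$ and choose $j_0$ attaining the maximum. Then $tG - E = \sum_j (tb_j - a_j)E_j$ is effective and has vanishing coefficient along $E_{j_0}$, producing the decomposition
\[
-E|_{E_{j_0}} \;=\; -t\,G|_{E_{j_0}} \;+\; (tG - E)|_{E_{j_0}}.
\]
The first summand is an ample $\Q$-divisor on the projective variety $E_{j_0}$ (since $-G$ is $\pi$-ample and $\pi(E_{j_0}) = \{x\}$), while the second is the restriction to $E_{j_0}$ of an effective $\Q$-divisor not containing $E_{j_0}$, hence is itself effective on $E_{j_0}$. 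Since the sum of an ample and an effective $\Q$-divisor on a projective variety is big, $-E|_{E_{j_0}}$ is big on $E_{j_0}$—equivalently $\pi|_{E_{j_0}}$-big.

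To produce $G$, I would start from any $\pi$-ample Cartier divisor $A$ on $Y$ (available by projectivity of $\pi$) and, after a small $\Q$-factorialisation of $X$ if necessary, form $A - \pi^*\pi_*A$. This divisor is $\pi$-exceptional and $\pi$-ample; the classical Negativity Lemma forces it to be anti-effective, giving an effective $\pi$-exceptional $G_0$ with $-G_0$ being $\pi$-ample. A small perturbation by positive multiples of the remaining exceptional divisors—permissible because the $\pi$-ample cone is open—then yields $G$ with the required full support. The main technical obstacle is precisely this construction of $G$ in the complex-analytic setting of germs: one has to verify that $\Q$-factorialisation, a $\pi$-ample Cartier divisor, and the Negativity Lemma can all be combined so that every component of $E$ appears with positive coefficient in $G$. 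Once $G$ is available, the remainder of the proof is an elementary manipulation of $\Q$-divisors together with the basic fact that ample-plus-effective is big on a projective variety.
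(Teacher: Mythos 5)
Your overall architecture --- cut down by general hyperplane sections of $X$ to the case where $\supp E$ is contracted to a point, then compare $E$ with an auxiliary effective exceptional divisor $G$ whose negative is $\pi$-ample, using the maximal ratio $t=\max_j a_j/b_j$ --- is a legitimate classical route and not the one the paper takes (the paper defers to the algebraic argument of \cite[Prop.~4.1]{Gra12}, which is organized around an \emph{effective relatively ample} divisor $H$ on $Y$ rather than around an anti-ample exceptional divisor). Your base-case computation, granted $G$, is correct: ample plus effective is big on the projective component where the ratio is attained. The genuine gap is exactly where you locate it, and your proposed repair does not close it. A small $\Q$-factorialisation of an arbitrary normal germ is not available (its existence is an MMP statement for klt singularities, not a general fact about normal complex spaces), and even if some small modification $X'\to X$ existed, the given $Y$ need not admit a morphism to $X'$, so $A-\pi^*\pi_*A$ still makes no sense for the given $\pi$. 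The perturbation step has an independent defect: the individual components $E_j$ need not be $\Q$-Cartier on the merely normal $Y$, so adding $\eps E_j$ can destroy $\Q$-Cartierness of $G$. The standard way out is to realize $\pi$ (over the Stein germ $X$) as the blow-up of a coherent ideal cosupported on the non-isomorphism locus, which has codimension at least two; its inverse image ideal is $\O Y(-G)$ with $G$ effective, Cartier, $\pi$-exceptional and $-G$ $\pi$-ample. The full-support requirement is then automatic and needs no perturbation: if a positive-dimensional fibre $\pi\inv(x)$ were disjoint from $\supp G$, the $\pi$-ample class $-G$ would restrict to the trivial bundle on it, which is absurd; by the addendum to the negativity lemma, $\pi\inv(x)\subseteq\supp G$ whenever $\dim\pi\inv(x)>0$, hence $\supp G\supseteq\supp E$.

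There is also a soft spot in the inductive step. You obtain bigness of $-E$ on the fibres of $\pi\big|_P$ over general points of $H\cap\pi(P)$, which are \emph{special} points of $\pi(P)$. Since $h^0$ is upper semicontinuous, bigness on special fibres does not formally imply bigness on the general fibre, so the assertion that ``a general fibre of $\pi\big|_{P'}$ is a general fibre of $\pi\big|_P$'' needs justification. Over $\C$ this is repairable by taking $H$ and the base point very general, so that they avoid the countably many jumping loci of the functions $b\mapsto h^0\big(P_b,\O{P_b}(-mE)\big)$; together with the routine Bertini--Seidenberg checks (normality of $H$ and of $\pi\inv(H)$, bimeromorphy of the restriction), this makes the reduction work, but as written it is not proved.
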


The proof is the same as for the algebraic version in~\cite[Prop.~4.1]{Gra12}, except that in the beginning, $H \subset Y$ should be chosen to be a relatively ample divisor (this exists because $\pi$ is assumed to be projective).

\subsection*{Proof of \cref{flenner log}}

Since any two log resolutions can be dominated by a third one, it suffices to prove the theorem for one particular choice of $\pi$.
By general results on resolution~\cite[Thm.~3.45]{Kol07}, we may therefore assume that $\pi$ is projective and an isomorphism over $\Reg X$.
Let $\sigma \in \HH0.\Reg X.\Omegap Xp.$ be a nonzero $p$-form, where $p \le c - 2$ with $c \defn \codim X{\Sing X}$.
By Grauert's Direct Image Theorem~\cite[Ch.~10, \S4]{CAS}, the sheaf $\pi_* \Omegal YpE$ is coherent.
The R{\"u}ckert Nullstellensatz~\cite[Ch.~3, \S2]{CAS} therefore applies to show that the pullback $\pi^* \sigma$ is a meromorphic form on $Y$.
Set $G$ to be the pole divisor of $\pi^* \sigma$ as a logarithmic form, i.e.~the minimal effective $\pi$-exceptional divisor such that
\begin{equation} \label{rueckert flenner}
\pi^* \sigma \in \HH0.Y.\Omegal YpE(G)..
\end{equation}
We want to show that $G = 0$.
Otherwise, there is a component $P \subset G$ such that $-G\big|_P$ is $\pi\big|_P$-big, by \cref{neg lemma}.
Set $P^c \defn (E - P)\big|_P$.
Note that~\labelcref{rueckert flenner} may be equivalently regarded as a map $\O Y (-G) \to \Omegal YpE$, which by minimality of $G$ does not vanish along $P$.
Denoting its restriction to $P$ by $i$, the residue sequence for $p$-forms along $P$ reads
\[ \xymatrix{
& & \O P \big( {-G\big|_P} \big) \ar@{ ir->}_-i[d] \ar@/^.5pc/[dr] \ar@/_.5pc/@{-->}[dl] \\
0 \ar[r] & \Omegal Pp{P^c} \ar[r] & \Omegal YpE\big|_P \ar^-{\res_P}[r] & \Omegal P{p-1}{P^c} \ar[r] & 0.
} \]
Hence $\O P \big( {-G\big|_P} \big)$ injects into $\Omegal Pr{P^c}$ either for $r = p - 1$ (if ${\res_P} \circ i \ne 0$) or for $r = p$ (if ${\res_P} \circ i = 0$).
In both cases, $r \le p$.

Now let $F \subset P$ be a general fibre of
\[ \rho \defn \pi\big|_P \from P \lto B \defn \pi(P), \]
and set $F^c \defn P^c\big|_F$.
Since $(P, P^c)$ is an snc pair, so is $(F, F^c)$. 
Furthermore, with $n = \dim X$, we have
\begin{equation} \label{dim F}
\dim F \ = \ \dim P - \dim B \ \ge \ n - 1 - (n - c) \ = \ c - 1
\end{equation}
since $B \subset \Sing X$.
By generic smoothness, 
we may shrink $B$ and assume that $\rho$ is an snc morphism of the pair $(P, P^c)$.
This means that $B$ is smooth and all components of $P^c$, as well as all their intersections (including $P$), are smooth over $B$.
In this case, there is a log differential sequence~\cite[Sec.~4.1]{EV90}
\[ 0 \lto \rho^* \Omegap B1 \lto \Omegal P1{P^c} \lto \Omegal {P/B}1{P^c} \lto 0 \]
inducing a filtration of $\Omegal Pr{P^c}$ with quotients~\cite[Ch.~II, Ex.~5.16]{Har77}
\[ \hspace{6.75em}
\sG_i \defn \rho^* \Omegap Bi \tensor \Omegal {P/B}{r-i}{P^c}, \qquad i = 0, \dots, r. \]
Consequently, $\O P \big( {-G\big|_P} \big)$ injects into $\sG_i$ for some $i$.
Restricting to $F$, we get $\sL \defn \O F \big( {-G\big|_F} \big) \inj \Omegal F{r-i}{F^c}^{\oplus \rk \Omegap Bi}$ and then even $\sL \inj \Omegal F{r-i}{F^c}$ by projecting onto a suitable summand.
In view of the Bogomolov--Sommese vanishing theorem~\cite[Cor.~6.9]{EV92}, this implies
\begin{equation} \label{BS}
\kappa(\sL) \ \le \ r - i \ \le \ r \ \le \ p.
\end{equation}
On the other hand, as $-G\big|_P$ is $\rho$-big, $\sL$ is a big line bundle on $F$ and so
\begin{equation} \label{big}
\kappa(\sL) \ = \ \dim F \ \ge \ c - 1
\end{equation}
thanks to~\labelcref{dim F}.
Put together, \labelcref{BS} and~\labelcref{big} yield $p \ge c - 1$, contradicting our assumption that $p \le c - 2$.
We conclude that $G = 0$, as desired. \qed

\section*{Counterexamples in positive characteristic}

A reformulation of Bogomolov--Sommese vanishing is that on an $n$-dimensional smooth projective variety $X$, we have $\HH0.X.\Omegap Xq \tensor L\inv. = 0$ for $q < n$ and any big line bundle $L$.
If $L$ is even assumed to be ample, the statement becomes a special case of (Kodaira--Akizuki--)Nakano vanishing~\cite[Thm.~1$''$]{AkizukiNakano54}.
Utilizing a construction due to Mumford~\cite{Mumford61}, we show that over fields of positive characteristic, Nakano vanishing fails in a strong sense.
Taking cones over these examples then allows us to prove \cref{flenner fail}.

\begin{prp}[Characteristic $p$ failure of Nakano vanishing] \label{Nakano fail}
Assume resolution of singularities in some fixed dimension $n \ge 2$ and characteristic $p > 0$.
Then over any algebraically closed field of characteristic~$p$, there exists an $n$-dimensional smooth projective variety $X$ together with an ample line bundle $L$ on $X$ such that
\[ \hspace{11em}
\HH0.X.\Omegap Xq \tensor L\inv. \ne 0 \qquad \text{for all $1 \le q \le n - 1$.} \]
In particular, there exist surfaces and threefolds with this property.
\end{prp}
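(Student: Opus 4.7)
The plan is to construct the required pair $(X, L)$ by starting from Mumford's classical example and extending to higher dimensions. For the base case $n = 2$: Mumford \cite{Mumford61} produces a smooth projective surface $X_2$ over an algebraically closed field of characteristic $p > 0$ together with an ample line bundle $L_2$ and a nonzero element $\omega \in \HH0.X_2.\Omegap{X_2}1 \tensor L_2\inv.$; since for $n = 2$ the range $1 \le q \le n - 1$ collapses to $q = 1$, this handles the surface case directly.

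For $n \ge 3$ I would build the example inductively by combining the Mumford surface with high-genus curves. Fix a smooth projective curve $B$ of genus $g \ge 2$ together with an ample line bundle $N$ on $B$ of degree at most $2g - 2$, ensuring $\HH0.B.\omega_B \tensor N\inv. \ne 0$; fix a nonzero $\tau$ there. As a first approximation, take $X_n \defn X_{n-1} \x B$ with projections $p_1, p_2$, and $L_n \defn p_1^* L_{n-1} \tensor p_2^* N$, which is ample on the smooth $n$-fold $X_n$. For each $2 \le q \le n - 1$, the K\"unneth wedge $p_1^* \sigma_{q-1} \wedge p_2^* \tau$ of the inductively-provided section $\sigma_{q-1} \in \HH0.X_{n-1}.\Omegap{X_{n-1}}{q-1} \tensor L_{n-1}\inv.$ with $\tau$ is a nonzero element of $\HH0.X_n.\Omegap{X_n}q \tensor L_n\inv.$, covering all degrees $q \ge 2$.

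The critical obstacle is the case $q = 1$, since the K\"unneth summands $\HH0.X_{n-1}.\Omegap{X_{n-1}}1 \tensor L_{n-1}\inv. \tensor \HH0.B.N\inv.$ and $\HH0.X_{n-1}.L_{n-1}\inv. \tensor \HH0.B.\omega_B \tensor N\inv.$ both vanish because $L_{n-1}$ and $N$ are ample and hence admit no global sections of their inverses. To produce the missing one-form one has to depart from the pure product: a natural remedy is to replace $X_n$ by a projective bundle $\P(E) \to X_{n-1}$ associated to a rank-two vector bundle $E$ whose Frobenius pullback is destabilized, so that the Mumford mechanism, now applied relative to the new base $X_{n-1}$, furnishes a nonzero element of $\HH0.X_n.\Omegap{X_n}1 \tensor L_n\inv.$ directly. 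The hypothesis of resolution of singularities in dimension $n$ is invoked precisely here, to smooth out any auxiliary singular models appearing en route from the Frobenius pullback. Since resolution of singularities is classical in dimensions at most three, the surface and threefold instances come out unconditionally, as claimed.
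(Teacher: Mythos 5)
Your reduction works for $2 \le q \le n-1$ but collapses exactly at $q=1$, and the remedy you offer there is not a proof. On $X_n = X_{n-1} \times B$ the two K\"unneth summands of $\Omegap{X_n}1 \tensor L_n\inv$ indeed have no sections, as you correctly observe. But your proposed replacement of the product by a projective bundle $\P(E) \to X_{n-1}$ with Frobenius-destabilized $E$ is only a gesture: you neither construct $E$, nor exhibit the ample $L_n$ on $\P(E)$, nor verify that $\HH0.{\P(E)}.\Omegap{\P(E)}1 \tensor L_n\inv. \ne 0$. Worse, the switch is self-defeating: the sections you built for $q \ge 2$ live on the product and use $\HH0.B.\omega_B \tensor N\inv. \ne 0$ in an essential way, whereas the fibres of $\P(E)$ are rational curves, on which the analogous space vanishes; so after the switch you lose every $q \ge 2$. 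You need a single variety carrying all the forms at once, and you have no mechanism producing one. (Two further inaccuracies: resolution of singularities plays no role in the $\P(E)$ construction, which is already smooth, so your account of where that hypothesis enters cannot be right; and $\deg N \le 2g-2$ does not by itself give $\HH0.B.\omega_B \tensor N\inv. \ne 0$ --- take $\deg N = 2g-2$ with $N \ne \omega_B$ --- though that is easily repaired by choosing $\deg N \le g-2$.)

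The engine your proposal is missing is Mumford's regularization trick applied to \emph{twisted} forms of \emph{arbitrary} degree, which is what the paper actually runs. Starting from any smooth projective $Z$ of dimension $n$ with ample $L_Z$, one picks nonzero \emph{rational} sections $\alpha_q$ of $\Omegap Zq \tensor L_Z\inv$ for every $1 \le q \le n-1$ (these trivially exist) and finds a single finite separable cover on which all the $\alpha_q$ become regular simultaneously; resolution of singularities in dimension $n$ is then used to resolve the resulting normal variety, and ampleness of the pulled-back line bundle is restored by Bloch--Gieseker and cyclic covers, all separable so that the forms survive. This handles all $q$ at once on one variety and requires no induction on dimension. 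Your base case attribution is also slightly off: Mumford's paper regularizes untwisted rational $1$-forms on surfaces, and producing an ample $L_2$ with $\HH0.{X_2}.\Omegap{X_2}1 \tensor L_2\inv. \ne 0$ already requires the twisted version of his argument together with the ampleness-restoring covers.
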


\subsection*{Mumford's construction}

By a \emph{twisted differential $q$-form} on a variety $X$, we will mean a section of $\Omegap Xq \tensor M$ for some line bundle $M$ on $X$.
The following lemma is valid only in positive characteristic.

\begin{lem}[Regularizing forms] \label{reg forms}
Let $X$ be a smooth projective variety, $U \subset X$ a dense open subset and $\alpha \in \HH0.U.\Omegap Uq \tensor M\big|_U. \setminus \set0$ a nonzero rational twisted $q$-form.
Then there exists a finite separable map $f \from Y \to X$ such that $Y$ is normal and $f^* \alpha \in \HH0.Y.\Omegap Yq \tensor f^*M.$ is a nonzero \emph{regular} twisted $q$-form.
\end{lem}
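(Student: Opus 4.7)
The plan is to construct $f\from Y\to X$ as a composition of Artin--Schreier covers, chosen to successively absorb the polar divisor of $\alpha$. Let $D = \sum a_i D_i$ denote the polar divisor of $\alpha$ viewed as a rational section of $\Omegap Xq \tensor M$, and set $N(\alpha) \defn \sum_i a_i$. If $N(\alpha) = 0$, then $\alpha$ is already regular and one takes $Y = X$, $f = \id$. Otherwise, it suffices to produce a finite separable morphism $g\from X' \to X$ with $X'$ normal such that $N(g^*\alpha) < N(\alpha)$, and iterate finitely often.

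To construct such a $g$, fix a component $D_1 \subset D$ of polar order $a_1$ and work near a general point of $D_1$, with local equation $t$ for $D_1$, so that $\alpha = t^{-a_1}\omega$ for a regular twisted $q$-form $\omega$ not divisible by $t$. The key characteristic-$p$ mechanism to exploit is the Artin--Schreier identity: for a cover $g\from X' \to X$ defined by $y^p - y = h$ with $h \in K(X)$, one has $d(y^p) = 0$ and hence $dh = -dy$ on $X'$, so that the polar part of $dh$ is converted into the regular differential $-dy$. By choosing $h$ with a polar behavior along $D_1$ adapted to the local structure of $\alpha$---for instance $h = t^{1-a_1}$ when $a_1 - 1$ is coprime to $p$, or $h = t_j\, t^{-a_1}$ for a local coordinate $t_j$ transverse to $D_1$ when $\omega$ does not involve $dt$---a direct local computation shows that $g^*\alpha$ loses polar order along the preimage of $D_1$. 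Globally, $g$ is taken to be the normalization of $X$ in the separable field extension $K(X)[y]/(y^p - y - h)$; this is finite and separable, and $X'$ is normal by construction.

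The hard part is the local analysis ensuring that $N(g^*\alpha) < N(\alpha)$. A naive choice such as $h = t^{1-a_1}$ only regularizes the ``transverse'' summand of $\alpha$ containing the differential $dt$, and can in fact make the ``tangential'' summand (the part of $t^{-a_1}\omega$ in which $\omega$ does not involve $dt$) strictly worse. To handle all tangent directions simultaneously one must choose $h$ carefully, possibly involving several coordinates, and in general compose several Artin--Schreier covers, tracking the cumulative effect on $N$. The argument ultimately relies on the abundance of non-$p$-th-power elements in $K(X)$ in positive characteristic, which guarantees a plentiful supply of nontrivial Artin--Schreier extensions to work with; this is precisely the ingredient that fails in characteristic zero, where tame ramification cannot absorb the poles of a form.
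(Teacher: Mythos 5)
Your overall strategy---kill the poles of $\alpha$ by passing to suitable Artin--Schreier covers and induct on the total polar multiplicity $N(\alpha)$---is the right circle of ideas: this is exactly the mechanism behind Mumford's construction, which the paper invokes as a black box. But the write-up has a genuine gap rather than a proof: the entire content of the lemma is concentrated in the claim that some finite separable cover $g$ satisfies $N(g^*\alpha) < N(\alpha)$, and you never establish it. You assert that ``a direct local computation shows that $g^*\alpha$ loses polar order'' for covers such as $y^p - y = t^{1-a_1}$ or $y^p - y = t_j t^{-a_1}$, but then immediately concede that these choices can make other summands strictly worse, and you defer the correct choice of $h$ to an unspecified ``careful'' analysis. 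The difficulty is in fact worse than you indicate: even in the simplest case the natural candidate fails to decrease $N$ at all. Take $X = \PP1$ with coordinate $t$ and $\alpha = \d t$, which has a pole of order $2$ at infinity. On the Artin--Schreier cover $z^p - z = t$ one has $\pi^*\alpha = -\d z$; the place over $t=\infty$ is totally ramified with $v'(z) = -1$, so $\d z$ again has a pole of order exactly $2$ there. (A cover that does work here is a separable map $f \from C \to \PP1$ from a non-ordinary curve, obtained by writing a regular differential in the kernel of the Cartier operator as $\d f$; producing such covers in general, and for $q$-forms of all degrees, is precisely the content of Mumford's lemma.) As it stands, your argument reduces the statement to an unproven claim essentially equivalent to the statement itself.

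For comparison, the paper does not attempt this local analysis: it trivializes $M$ on a finite open cover, quotes Mumford's lemma (extended from $1$-forms on surfaces to $q$-forms in any dimension by inspection of his argument) to regularize each untwisted piece $\alpha\big|_{U_i}$ on a separable cover $V_i \to U_i$, and then glues by taking $Y$ to be the normalization of $X$ in the compositum of the fields $k(V_i)$; regularity of $f^*\alpha$ follows by functoriality of pullback from the smooth $V_i$, and non-vanishing from separability. If you want a self-contained proof along your lines, you must either carry out Mumford's actual inductive choice of covers or cite it. Note also a secondary issue: your induction only controls divisorial pole orders on the (possibly singular) normal varieties produced by wildly ramified covers, so at the end it yields a priori only a section of the reflexive sheaf $\Omega^{[q]}_Y \tensor f^*M$, whereas the lemma asserts regularity as a section of $\Omegap Yq \tensor f^*M$; the paper sidesteps this because its form is pulled back from the smooth models $V_i$.
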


\begin{proof}
Let $X = \bigcup U_i$ be a finite open covering of $X$ which trivializes $M$, i.e.~$M\big|_{U_i} \isom \O{U_i}$.
Using these trivializations, the $\alpha_i \defn \alpha\big|_{U_i}$ become ``ordinary'' rational $q$-forms.
As in~\cite[p.~340]{Mumford61}, we may find finite separable maps $f_i \from V_i \to U_i$ with $V_i$ smooth and $f_i^*(\alpha_i)$ regular.\footnote{Mumford only considers $1$-forms on surfaces, but an inspection of his argument reveals that it also applies to forms of higher degree.}
Consider the compositum $L$ of all the function fields $k(V_i)$, as a subfield of say the algebraic closure of $k(X) = k(U_i)$.
Then the extension $L/k(X)$ is finite and separable.
Take $Y$ to be the normalization of $X$ in~$L$ and $f \from Y \to X$ the natural map.
It is clear that $f^* \alpha$ is a regular twisted form.
Also, $f^* \alpha \ne 0$ because $\alpha \ne 0$ and $f$ is separable.
\end{proof}

\begin{proof}[Proof of \cref{Nakano fail}]
Pick an arbitrary smooth projective variety $Z$ of dimension~$n$, an ample line bundle $L_Z$ on $Z$ and nonzero rational $q$-forms $\alpha_q$ twisted by $M = L_Z\inv$, for each $1 \le q \le n - 1$.
(This obviously exists.)
By \cref{reg forms}, there is a finite cover $f \from W \to Z$ such that the $f^* \alpha_q$ are regular.
Note that $L_W \defn f^* L_Z$ is still ample.
Let $g \from X \to W$ be a resolution of $W$.
Then $L_X \defn g^* L_W$ is big and nef (but not ample unless $W$ is already smooth).
The forms $(f \circ g)^* (\alpha_q)$ are nonzero global sections of $\Omegap Xq \tensor L_X\inv$.

We may assume that $g$ is a composition of blow-ups of codimension two centers, in which case $L_X - E$ is ample for some suitable ($g$-anti-ample) effective $g$-exceptional \Q-divisor $E \ge 0$.
The trouble, of course, is that $E$ will almost never have integral coefficients and thus $L_X - E$ does not correspond to a line bundle.
We will overcome this difficulty by performing suitable (ramified) finite covers.

After a perturbation, we may assume that $E = \sum_P m_P P$ is a \zp-divisor, i.e.~if the coefficients $m_P = c_P/d_P$ are written in lowest terms, then $p$ does not divide $d_P$.
Let $P \subset \supp E$ be an arbitrary irreducible component, and set $d = d_P$.
By~\cite[Lemma~2.1]{BlochGieseker71}, after a finite cover we may assume that the line bundle $\O X(P)$ admits a $d$-th root.
The divisor $P$ may not be irreducible anymore, but it is still smooth.
If $s \in \HH0.X.\O X(P).$ is a section corresponding to $P$, consider
\[ \pi \from X[{\sqrt[d]s}] \lto X, \]
the cover obtained by extracting a $d$-th root of that section~\cite[(2.44)]{Kollar13}.
Since $X$ and $P$ are smooth, so is $X[{\sqrt[d]s}]$.
Furthermore, $\pi$ is totally ramified along $P$ by construction and hence $\pi^*(m_P P)$ becomes integral.
Also note that since $p$ does not divide $d$, all covers performed are in fact separable, so the twisted forms $\alpha_q$ stay nonzero.

Repeating this procedure finitely often, we arrive at a situation where $L \defn L_X - E$ becomes a \Z-divisor (and it stays ample).
The forms $\alpha_q \in \HH0.X.\Omegap Xq \tensor L_X\inv. \subset \HH0.X.\Omegap Xq \tensor L\inv.$ show that we have found the desired example, finishing the proof of \cref{Nakano fail}.
\end{proof}

\subsection*{Proof of \cref{flenner fail}}

Fix an integer $n \in \set{3, 4}$, and pick an $(n - 1)$-dimensional smooth projective variety $Y$ together with an ample line bundle $L$ violating Nakano vanishing as in \cref{Nakano fail}.
Let
\[ X \defn \Spec \bigoplus_{m \ge 0} \HH0.Y.L^m. \]
be the affine cone over $(Y, L)$.
Blowing up the vertex gives a log resolution $\pi \from \wt X \to X$, where $\wt X$ is the total space of $L\inv$ and the exceptional locus $E$ is the zero section.
As we have carefully proven in \cite[Prop.~11.3]{FunnyExt}, the non-vanishing of $\HH0.Y.\Omegap Yq \tensor L\inv.$ implies non-extension of $q$-forms on $X$.
Thus $X$ is an example proving \cref{flenner fail}. \qed

\providecommand{\bysame}{\leavevmode\hbox to3em{\hrulefill}\thinspace}
\providecommand{\MR}{\relax\ifhmode\unskip\space\fi MR}
\providecommand{\MRhref}[2]{%
  \href{http://www.ams.org/mathscinet-getitem?mr=#1}{#2}
}
\providecommand{\href}[2]{#2}

\end{document}